\documentclass{article}
\usepackage{amsmath}
\usepackage{amssymb}

\setcounter{MaxMatrixCols}{10}

\newtheorem{proposition}{Proposition}
\newtheorem{lemma}{Lemma}

\newenvironment{proof}[1][Proof]{\noindent\textbf{#1.} }{\ \rule{0.5em}{0.5em}}
\input{tcilatex}
\input{tcilatex}

\begin{document}

\title{A ladder ellipse problem}
\author{Alan Horwitz \\
Professor Emeritus of Mathematics\\
Penn State Brandywine\\
25 Yearsley Mill Rd.\\
Media, PA 19063\\
alh4@psu.edu}
\date{10/7/15}
\maketitle

\begin{abstract}
We consider a problem similar to the well--known ladder box problem, but
where the box is replaced by an ellipse. A ladder of a given length, $s$,
with ends on the positive $x$ and $y$ axes, is known to touch an ellipse
that lies in the first quadrant and is tangent to the positive $x$ and $y$
axes. We then want to find the height of the top of the ladder above the
floor. We show that there is a value, $s=s_{0}$, such that there is only one
possible position of the ladder, while if $s>s_{0}$, then there are two
different possible positions of the ladder. Our solution involves solving an
equation which is equivalent to solving a $4$th degree polynomial equation.
\end{abstract}

\section{Introduction}

The well--known ladder box problem(see \cite{BE}, \cite{W}) involves a
ladder of a given length, say $s$ meters, with ends on the positive $x$ and $%
y$ axes, which touches a given rectangular box(a square in \cite{Z}) at its
upper right corner. One then wants to determine how high the top of the
ladder is above the floor. Other versions of problem(\cite{Z}) ask how much
of the ladder is between the wall(or floor) and the point of contact of the
ladder with the box. We ask similar questions in this note, but where the
box is replaced by an \textit{ellipse}, $E_{0}$, that lies in the first
quadrant and is tangent to the positive $x$ and $y$ axes at the points $(c,0)
$ and $(0,d)$. For example, consider the ellipse with equation $%
x^{2}+4y^{2}+2xy-8x-16y+16=0$, which is tangent to the positive $x$ and $y$
axes at the points $(4,0)$ and $(0,2)$. If the ladder has length $10$
meters, then how high is the top of the ladder above the floor and how many
positions of the ladder are possible ? One main difference here is that we
now allow the ladder to be tangent at \textit{any} point of $E_{0}$ rather
than just at the upper right corner of a rectangular box. We suppose that
the ladder touches the positive $x$ and $y$ axes at the points $(u,0)$ and $%
(0,v)$, respectively, and we call such a ladder admissible. We then want to
find $u$, which is the height of the top of the ladder above the floor. It
is not hard to show that the equation of $E_{0}$ must have the form

\begin{equation}
d^{2}x^{2}+c^{2}y^{2}+2Cxy-2cd^{2}x-2c^{2}dy+c^{2}d^{2}=0\text{,}  \label{1}
\end{equation}%
and that if the equation of $E_{0}$ is given by (\ref{1}), then $E_{0}$ is
tangent to the positive $x$ and $y$ axes at the points $(c,0)$ and $(0,d)$.
Note that for (\ref{1}) to be the equation of an ellipse, we need $%
c^{2}d^{2}-C^{2}>0$, which is equivalent to 
\begin{equation}
cd>\left\vert C\right\vert \text{.}  \label{5}
\end{equation}

We now assume throughout that $T$ is the triangle with vertices $(0,0),(u,0)$%
, and $(0,v)$ with $u,v>0$.

\textbf{Remark: }There is another way to look at this problem: Given an
ellipse, $E_{0}$, inscribed in a right triangle, $T$, suppose that we know
the length of the hypotenuse of $T$ and the points of tangency of $E_{0}$
with the other two sides of $T$. We want to find the lengths of the other
sides of $T$.

It is useful now to derive another form for the equation of $E_{0}$ which
depends on two parameters, which we denote by $w$ and $t$. The following
lemma and proposition were proven in \cite{H2} for the case when $T$ is the
unit triangle. Throughout we let $I$ denote the open interval $(0,1)$ and $%
I^{2}$ the unit square $=(0,1)\times (0,1)$.

\begin{lemma}
\label{medial}Let $V$ be the interior of the medial triangle of $T$ $=$
triangle with vertices at the midpoints of the sides of $T$. Then $V=\left\{
(x_{w,t},y_{w,t})\right\} _{(w,t)\in I^{2}}$, where

$x_{w,t}=\dfrac{1}{2}\dfrac{tu}{w+(1-w)t}$ and $y_{w,t}=\dfrac{1}{2}\dfrac{wv%
}{w+(1-w)t}$.
\end{lemma}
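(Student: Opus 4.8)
The plan is to identify the medial triangle of $T$ concretely and then recognize the given map as a barycentric parametrization of its interior. Since $T$ has vertices $(0,0),(u,0),(0,v)$, the midpoints of its sides are
\begin{equation}
M_{1}=\left(\tfrac{u}{2},0\right),\qquad M_{2}=\left(0,\tfrac{v}{2}\right),\qquad M_{3}=\left(\tfrac{u}{2},\tfrac{v}{2}\right),\notag
\end{equation}
so $V$ is the open triangle with these three vertices. The cleanest route exploits the hypothesis that the unit-triangle case is already settled in \cite{H2}: the affine map $\Phi(x,y)=(x/u,\,y/v)$ carries $T$ to the unit triangle $T'$ with vertices $(0,0),(1,0),(0,1)$, carries the medial triangle of $T$ onto the medial triangle of $T'$, and preserves interiors. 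Under $\Phi$ the stated formulas transform into $\tfrac12\,t/(w+(1-w)t)$ and $\tfrac12\,w/(w+(1-w)t)$, which are exactly the unit-triangle formulas. Thus the general statement follows immediately by pulling back the cited result along $\Phi^{-1}$.

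For a self-contained argument I would instead verify directly that $(x_{w,t},y_{w,t})$ has barycentric coordinates with respect to $M_{1},M_{2},M_{3}$ that are positive and sweep out the full open simplex. Writing $P=\lambda_{1}M_{1}+\lambda_{2}M_{2}+\lambda_{3}M_{3}$ with $\lambda_{1}+\lambda_{2}+\lambda_{3}=1$ and matching coordinates gives $\lambda_{1}+\lambda_{3}=t/D$ and $\lambda_{2}+\lambda_{3}=w/D$, where $D=w+(1-w)t$. The one nonobvious algebraic step is the identity $w+t-D=wt$, which yields the clean closed forms
\begin{equation}
\lambda_{1}=\frac{t(1-w)}{D},\qquad \lambda_{2}=\frac{w(1-t)}{D},\qquad \lambda_{3}=\frac{wt}{D}.\notag
\end{equation}
One checks that these sum to $1$ and, for $(w,t)\in I^{2}$, are all strictly positive (each of $w,t,1-w,1-t$ lies in $(0,1)$ and $D>0$), so $P$ lies in the interior of the medial triangle. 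This establishes the inclusion $\{(x_{w,t},y_{w,t})\}_{(w,t)\in I^{2}}\subseteq V$.

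The step I expect to require the most care is the reverse inclusion, namely surjectivity onto all of $V$. I would handle it by inverting the barycentric formulas explicitly: from the ratios $\lambda_{3}/\lambda_{1}=w/(1-w)$ and $\lambda_{3}/\lambda_{2}=t/(1-t)$ one solves
\begin{equation}
w=\frac{\lambda_{3}}{\lambda_{1}+\lambda_{3}},\qquad t=\frac{\lambda_{3}}{\lambda_{2}+\lambda_{3}}.\notag
\end{equation}
Given any interior point of the medial triangle, its barycentric coordinates $\lambda_{1},\lambda_{2},\lambda_{3}$ are all positive, so these formulas return a unique $(w,t)\in I^{2}$ mapping back to it. This simultaneously proves that the parametrization is onto $V$ and that it is a bijection, completing the proof. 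The affine-reduction argument and the direct barycentric argument are interchangeable; I would lead with the reduction since it is shortest, and include the explicit inversion as the substantive verification of the onto direction.
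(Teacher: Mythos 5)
Your proposal is correct, and it is worth separating your two arguments, because they differ from the paper to different degrees. The paper proves the lemma directly in Cartesian coordinates: it characterizes $V$ by the inequalities (\ref{6}), namely $\tfrac{v}{2}-\tfrac{v}{u}x<y<\tfrac{v}{2}$ and $0<x<\tfrac{u}{2}$, verifies these for $(x_{w,t},y_{w,t})$ by explicit algebra, and, conversely, solves the two defining equations for the parameters, obtaining $w=\tfrac{2uy+2vx-uv}{2vx}$ and $t=\tfrac{2uy+2vx-uv}{2uy}$, which (\ref{6}) forces into $I^{2}$. Your barycentric argument is this same two-inclusion skeleton in better coordinates: positivity of your weights $\lambda_{1}=t(1-w)/D$, $\lambda_{2}=w(1-t)/D$, $\lambda_{3}=wt/D$ is equivalent to the paper's system of inequalities (for instance $y<\tfrac{v}{2}$ is $\lambda_{1}>0$, $x<\tfrac{u}{2}$ is $\lambda_{2}>0$, and $\tfrac{v}{2}-\tfrac{v}{u}x<y$ is $\lambda_{3}>0$), and your inversion $w=\lambda_{3}/(\lambda_{1}+\lambda_{3})$, $t=\lambda_{3}/(\lambda_{2}+\lambda_{3})$ reduces, once the $\lambda_{i}$ are written in terms of $(x,y)$, to exactly the paper's formulas above; what the re-coordinatization buys is transparency, since positivity and bijectivity are visible at a glance instead of emerging from inequality manipulation. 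Your affine reduction, by contrast, is genuinely different from anything the paper does: although the paper notes that the lemma was proven in \cite{H2} for the unit triangle, it reproves the general case from scratch rather than pulling the unit case back along $(x,y)\mapsto(x/u,y/v)$; your reduction makes the general statement a one-line corollary of the cited result, at the price of not being self-contained (which may be why the author avoided it, \cite{H2} being listed as ``to appear''). One point to tighten in your surjectivity step: deriving $w$ and $t$ from the ratios $\lambda_{3}/\lambda_{1}$ and $\lambda_{3}/\lambda_{2}$ shows only that a preimage, if one exists, must be given by those formulas; to conclude existence you should substitute them back into the forward formulas and check, using $\lambda_{1}+\lambda_{2}+\lambda_{3}=1$, that they reproduce $(\lambda_{1},\lambda_{2},\lambda_{3})$ --- a two-line computation, since one finds $D=\lambda_{3}\big/\bigl((\lambda_{1}+\lambda_{3})(\lambda_{2}+\lambda_{3})\bigr)$, after which $wt/D=\lambda_{3}$, $t(1-w)/D=\lambda_{1}$, and $w(1-t)/D=\lambda_{2}$ follow at once.
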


\begin{proof}
It follows easily that $(x,y)\in V$ if and only if 
\begin{eqnarray}
\dfrac{v}{2}-\dfrac{v}{u}x &<&y<\dfrac{v}{2}  \label{6} \\
0 &<&x<\dfrac{u}{2}\text{.}  \notag
\end{eqnarray}%
Now suppose that $(w,t)\in I^{2}$. We want to show that $\left(
x_{w,t},y_{w,t}\right) \in V$; First, $x_{w,t}-\dfrac{u}{2}=-\dfrac{1}{2}%
\dfrac{uw(1-t)}{w+(1-w)t}<0$ and $y_{w,t}-\dfrac{v}{2}=-\dfrac{1}{2}\dfrac{%
v\left( 1-w\right) t}{w+(1-w)t}<0$, which implies that $0<x_{w,t}<\dfrac{u}{2%
}$ and $y_{w,t}<\dfrac{v}{2}$; Also, $\dfrac{v}{u}x_{w,t}+y_{w,t}-\dfrac{v}{2%
}=\allowbreak \dfrac{1}{2}\dfrac{vwt}{w+(1-w)t}$ $>0$, and so $\dfrac{v}{2}-%
\dfrac{v}{u}x_{w,t}<y_{w,t}$, which implies that $\left(
x_{w,t},y_{w,t}\right) \in V$ by (\ref{6}). Conversely, suppose that $\left(
x_{w,t},y_{w,t}\right) \in V$. We want to show that $(w,t)\in I^{2}$;
Solving the system of equations 
\begin{eqnarray*}
\dfrac{1}{2}\dfrac{tu}{w+(1-w)t} &=&x \\
\dfrac{1}{2}\dfrac{wv}{w+(1-w)t} &=&y
\end{eqnarray*}%
yields the unique solution $w=w_{x,y}=\dfrac{2uy+2vx-uv}{2xv},t=t_{x,y}=%
\dfrac{2uy+2vx-uv}{2uy}$; Substituting $x=x_{w,t},y=y_{w,t}$ and using (\ref%
{6}) easily implies that

$0<w_{x,y},t_{x,y}<1$ and so $\left( w_{x,y},t_{x,y}\right) \in I^{2}$.
\end{proof}

\begin{proposition}
\label{P1}(i) Suppose that $E_{0}$ is an ellipse inscribed in $T$. Then the
equation of $E_{0}$ is given by 
\begin{gather}
(vw)^{2}x^{2}+(ut)^{2}y^{2}+2wt\left( 2w+2t-2wt-1\right) uvxy-2ut(vw)^{2}x
\label{3} \\
-2wv(ut)^{2}y+(uvwt)^{2}=0  \notag
\end{gather}

for some $(w,t)\in I^{2}$.

(ii) If $E_{0}$ is an ellipse with equation (\ref{3}) for some $0<t<1,0<w<1$%
, then $E_{0}$ is tangent to the three sides of $T$\ at the points $%
T_{1}=(ut,0),T_{2}=(0,vw)$, and $T_{3}=\left( \dfrac{ut(1-w)}{w+t-2wt},%
\dfrac{vw(1-t)}{w+t-2wt}\right) $.
\end{proposition}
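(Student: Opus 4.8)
The plan is to reduce the general triangle $T$ to the unit triangle $\hat{T}$ with vertices $(0,0),(1,0),(0,1)$, for which the statement is already established in \cite{H2}, by means of the affine map $\phi(x,y)=(x/u,\,y/v)$. Since $\phi$ is a nonsingular linear map it carries $T$ onto $\hat{T}$ (the vertices $(u,0),(0,v)$ go to $(1,0),(0,1)$), carries ellipses to ellipses, preserves tangency and incidence, and sends the point of contact of a conic with a line to the point of contact of the image conic with the image line. Consequently $E_0$ is an ellipse inscribed in $T$ if and only if $\phi(E_0)$ is an ellipse inscribed in $\hat{T}$, and the contact points transform by $\phi$. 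This reduces the whole proposition to the unit case via the single substitution $X=x/u,\,Y=y/v$.

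For part (i), suppose $E_0$ is inscribed in $T$. Then $\phi(E_0)$ is inscribed in $\hat{T}$, so by the unit-triangle form of the proposition proved in \cite{H2} its equation is the $u=v=1$ specialization of (\ref{3}) for some $(w,t)\in I^2$, namely $w^2X^2+t^2Y^2+2wt(2w+2t-2wt-1)XY-2tw^2X-2wt^2Y+(wt)^2=0$. Substituting $X=x/u,\,Y=y/v$ and multiplying through by $u^2v^2$ returns precisely equation (\ref{3}); this is a routine term-by-term check (for instance the $x^2$-coefficient becomes $w^2v^2=(vw)^2$, the $x$-coefficient becomes $-2tw^2uv^2=-2ut(vw)^2$, and the $xy$-coefficient becomes $2wt(2w+2t-2wt-1)uv$). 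Hence $E_0$ has the form (\ref{3}) for the same $(w,t)\in I^2$.

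For part (ii) I would argue directly, since this simultaneously exhibits the contact points. Setting $y=0$ in (\ref{3}) gives $(vw)^2x^2-2ut(vw)^2x+(uvwt)^2=(vw)^2(x-ut)^2$, a perfect square, so the $x$-axis meets $E_0$ only at the double point $T_1=(ut,0)$; symmetrically $x=0$ yields $(ut)^2(y-vw)^2$, giving tangency at $T_2=(0,vw)$. The substantive computation is the hypotenuse $\tfrac{x}{u}+\tfrac{y}{v}=1$: substituting $y=v(1-x/u)$ into (\ref{3}) produces a quadratic in $x$ which I expect to factor as a nonzero multiple of $\bigl(x-\tfrac{ut(1-w)}{w+t-2wt}\bigr)^2$, so that the unique contact point is $T_3$; the identity $w+t-2wt=w(1-t)+t(1-w)>0$ guarantees the denominator is positive, whence $T_3$ lies in the first quadrant, and one checks $\tfrac{x_3}{u}+\tfrac{y_3}{v}=1$ so it lies on the hypotenuse segment. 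For completeness one also verifies that (\ref{3}) always represents an ellipse: with $c=ut,\,d=vw,\,C=wt(2w+2t-2wt-1)uv$ one gets $cd-|C|=uvwt\bigl(1-|2(w+t-wt)-1|\bigr)>0$, since $w+t-wt=1-(1-w)(1-t)\in(0,1)$ forces $|2(w+t-wt)-1|<1$, so condition (\ref{5}) holds.

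The only real obstacle I anticipate is the hypotenuse computation in (ii): carrying out the substitution, checking that the discriminant vanishes, and recognizing that the double root is exactly the $x$-coordinate of $T_3$; everything else is clean affine bookkeeping or one-line factorizations on the axes. A reassuring independent cross-check on the entire scheme is that the center of (\ref{3}), found by solving its two first-order partial derivatives, works out to $\bigl(\tfrac{c^2d}{cd+C},\tfrac{cd^2}{cd+C}\bigr)=\bigl(x_{w,t},y_{w,t}\bigr)$, matching Lemma \ref{medial}; this confirms the algebra and ties the $(w,t)$-parametrization to the center lying in the medial triangle.
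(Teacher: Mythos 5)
Your proposal is correct, but it takes a genuinely different route from the paper's. For part (i) the paper does \emph{not} transfer from the unit triangle: it proves the general case directly, by combining the fact (established within its proof) that (\ref{3}) is a family of ellipses inscribed in $T$, the computation that each such ellipse has center $(x_{w,t},y_{w,t})$, and Lemma \ref{medial} showing these centers sweep out the whole medial triangle, and then invoking Chakerian's theorem \cite{C} that each point of the medial triangle is the center of exactly one inscribed ellipse --- so no inscribed ellipse can lie outside the family. Your affine reduction $(x,y)\mapsto (x/u,\,y/v)$ plus the unit-triangle case from \cite{H2} replaces all of this; it is legitimate (the paper itself certifies that \cite{H2} proves the unit case), it is shorter, and it makes Lemma \ref{medial} and the Chakerian citation unnecessary for the proposition, whereas the paper's route keeps the general-triangle argument self-contained in its own coordinates. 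For part (ii) the paper checks that $T_{1},T_{2},T_{3}$ satisfy (\ref{3}) and then verifies tangency by implicit differentiation, matching $dy/dx$ to the slopes $0$ and $-v/u$, with the vertical tangent at $T_{2}$ detected by a vanishing denominator against a nonvanishing numerator; your perfect-square factorizations on the three lines are cleaner and avoid differentiation entirely. The one step you left as an expectation --- the hypotenuse substitution --- does close: substituting $y=v(1-x/u)$ into (\ref{3}) and dividing by $v^{2}$ yields exactly $\bigl((w+t-2wt)x-ut(1-w)\bigr)^{2}=0$; for instance the $x^{2}$-coefficient $w^{2}+t^{2}-2wt(2w+2t-2wt-1)$ is precisely $(w+t-2wt)^{2}$, and the linear coefficient is $-2ut(1-w)(w+t-2wt)$, so the double root is the $x$-coordinate of $T_{3}$ as you predicted. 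One small remark: the paper additionally verifies nondegeneracy of (\ref{3}) via the invariant $AE^{2}+BD^{2}+4FC^{2}-2CDE-4ABF>0$, which its family-of-inscribed-ellipses argument for (i) genuinely needs; your type-check $cd>|C|$ alone would not rule out an empty or point conic, though in your architecture nothing requires more (part (ii) assumes $E_{0}$ is an ellipse, and your part (i) never needs (\ref{3}) to always define one).
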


\begin{proof}
Note that the denominator in both coordinates of $T_{3}$ is nonzero since $%
w+t-2wt>0$ holds for any $(w,t)\in I^{2}$ by the Arithmetic--Geometric Mean
inequality. First, suppose that $E_{0}$ is given by (\ref{3}) for some $%
(w,t)\in I^{2}$. Then $E_{0}$ has the form $Ax^{2}+By^{2}+2Cxy+Dx+Ey+F=0$,
where $AB-C^{2}$ easily simplifies to $\allowbreak 4(uvwt)^{2}\left(
1-w\right) \left( 1-t\right) {\large (}(1-t)w+t{\large )}>0$, and thus (\ref%
{3}) defines the equation of an ellipse. Also, $%
AE^{2}+BD^{2}+4FC^{2}-2CDE-4ABF=\allowbreak 16(uvwt)^{4}\left( 1-w\right)
^{2}\left( 1-t\right) ^{2}>0$, which implies that such an ellipse is
nontrivial. Now let $H(x,y)$ denote the left hand side of (\ref{3}). Since $%
H(T_{1})=\allowbreak H(T_{2})=H(T_{3})=0$, the three points $T_{1},T_{2}$,
and $T_{3}$ lie on $E_{0}$. Differentiating both sides of the equation in (%
\ref{3}) with respect to $x$ yields $\dfrac{dy}{dx}=D(x,y)$, where $D(x,y)=-%
\dfrac{vw}{ut}\dfrac{-\allowbreak 2uy\left( 1-w\right) t^{2}+\allowbreak
u\left( vw-2wy+y\right) t-v\allowbreak wx}{\left(
2w^{2}vx-2vwx-uy+vwu\right) t-vwx\left( 2w-1\right) }\allowbreak $; $%
D(T_{1})=\allowbreak 0=$ slope of horizontal side of $T$ and $%
D(T_{3})=\allowbreak -\dfrac{v}{u}=$ slope of the hypotenuse of $T$; When $%
x=0,y=w$, the denominator of $D(x,y)$ equals $0$, but the numerator of $%
D(x,y)$ equals $\allowbreak 2uvwt\left( 1-t\right) \left( 1-w\right) \neq 0$%
. Thus $E_{0}$ is tangent to the vertical side of $T$. For any simple closed
convex curve, such as an ellipse, tangent to each side of $T$ then implies
that that curve lies in $T$. Since it follows easily that $T_{1},T_{2}$, and 
$T_{3}$ lie on the three sides of $T$, that proves that $E_{0}$ is inscribed
in $T$. Second, suppose that $E_{0}$ is an ellipse inscribed in $T$. It is
well known \cite{C} that each point of $V$, the medial triangle of $T$, is
the center of one and only one ellipse inscribed in $T$, and thus the center
of $E_{0}$ lies in $V$. By Lemma \ref{medial}, the center of $E_{0}$ has the
form $(x_{w,t},y_{w,t}),(w,t)\in S$. Now it is not hard to show that each
ellipse given by (\ref{3}) also has center $(x_{w,t},y_{w,t})$. Since we
have just shown that (\ref{3}) represents a family of ellipses inscribed in $%
T$\ as $(w,t)$ varies over $I^{2}$, if $E_{0}$ were not given by (\ref{3})
for some $(w,t)\in I^{2}$, then there would be two ellipses inscribed in $T$
and with the same center. That cannot happen since each point of $V$ is the
center of only one ellipse inscribed in $T$. That proves (i). We have also
just shown that if $E_{0}$ is given by (\ref{3}), then $E_{0}$ is tangent to
the three sides of $T$\ at the points $T_{1},T_{2}$, and $T_{3}$, which
proves (ii).
\end{proof}

Now if we know that $E_{0}$ lies in the first quadrant and is tangent to the
positive $x$ and $y$ axes at the points $(c,0)$ and $(0,d)$, then $E_{0}$ is
inscribed in some triangle, $T$, with vertices $(0,0),(u,0)$, and $(0,v)$
with $u,v>0$. By Proposition \ref{P1}(ii), $c=ut$ and $d=vw$; Substituting
into (\ref{3}) yields

\begin{equation}
d^{2}x^{2}+c^{2}y^{2}+2cd\left( 2w+2t-2wt-1\right)
xy-2cd^{2}x-2c^{2}dy+c^{2}d^{2}=0\text{.}  \label{2}
\end{equation}%
Comparing (\ref{1}) and (\ref{2}) yields $C=cd(2w+2t-2wt-1)$, which implies
that 
\begin{equation}
w+t-wt=J\text{, }J=\dfrac{1}{2}\left( 1+\dfrac{C}{cd}\right) \text{.}
\label{4}
\end{equation}%
Note that by (\ref{5}) $cd>C$ and $cd>-C$, which implies that $0<J<1$; We
want to choose $(w,t)$ so that the ladder has the given length, $s$; Using $%
u=\dfrac{c}{t},v=\dfrac{d}{w}$, we have $s^{2}=u^{2}+v^{2}=\dfrac{c^{2}}{%
t^{2}}+\dfrac{d^{2}}{w^{2}}$, and since $w=\dfrac{J-t}{1-t}$\ from (\ref{4})
we have $s^{2}=f(t)$, where 
\begin{equation}
f(t)=\dfrac{c^{2}}{t^{2}}+\dfrac{d^{2}(1-t)^{2}}{(J-t)^{2}}\text{.}
\label{f}
\end{equation}%
If $t\in I$, then $\dfrac{J-t}{1-t}>0$ if and only if $t<J$; Also, if $t<1$,
then $1-\dfrac{J-t}{1-t}=\dfrac{1-J}{1-t}>0$; Thus we have 
\begin{equation}
w=\dfrac{J-t}{1-t}\in I\iff t<J\text{, where }t,J\in I\text{.}  \label{Jwt}
\end{equation}%
Thus for given $s$, using (\ref{f}), we want to solve the equation $%
f(t)=s^{2}$ for $t\in (0,J)$; For example, for the ellipse with equation $%
x^{2}+4y^{2}+2xy-8x-16y+16=0$, multiplying thru by $4$ yields the form of
the equation given in (\ref{1}), with $c=4$, $d=2$, and $C=4$; Suppose, say
that $s=10$. That gives $f(t)=\dfrac{16}{t^{2}}+\dfrac{4(1-t)^{2}}{\left(
3/4-t\right) ^{2}}$ and it is not hard to show that the equation $f(t)=100$
has two solutions $t_{1}=\dfrac{2}{3}$ and $t_{2}\approx 0.43$ in $(0,J)$, $%
J=\allowbreak \dfrac{3}{4}$; The corresponding $w$ values are then $w_{1}=%
\dfrac{J-t_{1}}{1-t_{1}}=\allowbreak \dfrac{1}{4}$ and $w_{2}=\dfrac{J-t_{2}%
}{1-t_{2}}\approx 0.56$, which gives $u_{1}=\dfrac{c}{t_{1}}=\allowbreak 6$, 
$v_{1}=\dfrac{d}{w_{1}}=8$, $u_{2}=\dfrac{c}{t_{2}}\approx \allowbreak
9.\allowbreak 35$, and $v_{2}=\dfrac{d}{w_{2}}\approx \allowbreak
3.\allowbreak 57$. The corresponding points where the ladder is tangent to $%
E_{0}$ are $T_{3,1}=\left( \dfrac{36}{7},\dfrac{8}{7}\right) $ and $%
T_{3,2}\approx (3.48,2.24)$; For this example there are \textit{two}
different positions of the ladder, which is analagous to what happens with
the ladder box problem. But are there always two different positions of the
ladder ? To help answer this, first we assume that there is an admissible
ladder of length $s$ which\ touches $E_{0}$, so it follows that the equation 
$f(t)=s^{2}$ has at least one solution in $(0,J)$; Since $%
\lim\limits_{t\rightarrow 0^{+}}f(t)=\lim\limits_{t\rightarrow
J^{-}}f(t)=\infty $, $f(t)=s^{2}$ must have at least two solutions in $(0,J)$%
, counting multiplicities. Now $f\,^{\prime }(t)=-2\left( \dfrac{c^{2}}{t^{3}%
}-\dfrac{d^{2}(1-t)(1-J)}{(J-t)^{3}}\right) $ and the function of $t,y=%
\dfrac{c^{2}}{t^{3}}$, is clearly decreasing on $(0,J)$; Since $\dfrac{d}{dt}%
\left( \dfrac{1-t}{(J-t)^{3}}\right) =\dfrac{(J-t)^{2}(3-J-2t)}{(J-t)^{6}}>0$
on $(0,J)$, the function of $t,y=\dfrac{d^{2}(1-t)(1-J)}{(J-t)^{3}}$ is
increasing on $(0,J)$; Thus the equation $\dfrac{c^{2}}{t^{3}}=\dfrac{%
d^{2}(1-t)(1-J)}{(J-t)^{3}}$ has at most one solution in $(0,J)$; Since $%
\lim\limits_{t\rightarrow 0^{+}}f\,^{\prime }(t)=\allowbreak -\infty $ and $%
\lim\limits_{t\rightarrow J^{-}}f\,^{\prime }(t)=\infty ,f\,^{\prime }$ has
at least one root in $(0,J)$; Hence $f\,^{\prime }$ has exactly one root,
say $t_{0}$, in $(0,J)$, and $\left\{ 
\begin{array}{ll}
f\,^{\prime }(t)<0 & \text{if }0<t<t_{0} \\ 
f\,^{\prime }(t)>0 & \text{if }t_{0}<t<J%
\end{array}%
\right. $; That in turn implies that $f$ is decreasing on $(0,t_{0})$ and is
increasing on $(t_{0},J)$ and so $f(t)=s^{2}$ has most two solutions in $%
(0,J)$; So we can conclude that $f(t)=s^{2}$ has exactly two solutions in $%
(0,J)$, \textit{counting multiplicities}. The only way that there would be
only one position of the ladder is if $f(t)-s^{2}$ has a double root in $%
(0,J)$; Can this actually happen ? To help answer this question, let $E_{R}=$
rightmost open arc of $E_{0}$ between the points, $P_{H}$ and $P_{V}$, on $%
E_{0}$ where the tangents are horizontal or vertical. Clearly there is an
admissible ladder tangent to $E_{0}$ at any point of $E_{R}$. As the point
of tangency approaches $P_{H}$ or $P_{V}$, $s$ approaches $\infty $. Hence
there is a unique value $s_{0}>0$ such that there is an admissible ladder of
length $s$ tangent to $E_{0}$ at any point of $E_{R}$ if and only if $s\geq
s_{0}$. How does one find $s_{0}$ ? $s_{0}=f(t_{0})$, where $t_{0}$ is the
unique root of $f\,^{\prime }$ in $(0,J)$ discussed above. For $s=s_{0}$,
there is only one position of the ladder, while if $s>s_{0}$, then there are
two different positions of the ladder. For the example above, $f\,^{\prime
}(t)$ has one root in $(0,J)$, $t_{0}\approx 0.58$; Then $%
s_{0}=f(t_{0})\approx \allowbreak 72$.

\textbf{Remark: }Solving $f(t)=s^{2}$\ is equivalent to solving the $4$th
degree polynomial equation%
\begin{equation}
p_{s}(t)=0,p_{s}(t)=(c^{2}-\allowbreak
s^{2}t^{2})(J-t)^{2}+d^{2}t^{2}(1-t)^{2}\text{.}  \label{p}
\end{equation}%
Note that one approach for solving the ladder box problem also involves
solving a $4$th degree polynomial equation.

\textbf{Remark: }Another way to solve this problem would be to use an affine
map to send $E_{0}$ to a circle, $C$, inscribed in a triangle, $T$, which is
now not necessarily a right triangle. Then the problem becomes: Suppose that
we know the length of a side, $c$, of a triangle, $T^{\prime }$, and we know
that a circle, $C$, is inscribed in $T^{\prime }$ and we know the points of
tangency of the other two sides, $a$ and $b$; Can one find the lengths of $a$
and $b$, and if yes, is the answer unique ?

\textbf{Special Case: }Not surprisingly, things simplify somewhat when the
ellipse, $E_{0}$, is a \textbf{circle}. In that case $d=c$, $C=0$, and $%
J=\allowbreak \dfrac{1}{2}$. The polynomial $p_{s}(t)$ from (\ref{p})
factors as a product of two quadratics:

$p_{s}(t)=-\dfrac{1}{4}{\large (}2\allowbreak \left( s-c\right) t^{2}-\left(
s-2c\right) t-c{\large )(}2\allowbreak \left( s+c\right) t^{2}-\left(
s+2c\right) t+c{\large )}$; It is then easy to show that the critical number 
$s_{0}$ of $f$ is given by $2\left( \sqrt{2}+1\right) c$, so that there are
two different positions of the ladder when $s>2\left( \sqrt{2}+1\right) c$.

\end{document}